\documentclass[11pt,letterpaper]{amsart}

\usepackage{amsmath, amssymb, amsfonts, stmaryrd,mathrsfs}
\usepackage{hyperref}

\DeclareMathOperator{\Hom}{Hom}

\DeclareMathOperator{\tr}{tr}

\DeclareMathOperator{\res}{res}

\DeclareMathOperator{\Idem}{Idem}
\DeclareMathOperator{\OP}{OP}
\newcommand{\wt}{\widetilde}
\newcommand{\M}{{\mathcal M}}
\newcommand{\IP}{{\mathcal {IP}}}

\newcommand{\C}{{\mathbb C}}
\newcommand{\N}{{\mathbb N}}

\newcommand{\dbar}{{\, {}^- \hspace{-3mm} d}}

\renewcommand{\Re}{{\operatorname {Re}}\,}

\theoremstyle{plain}
\newtheorem{prop}{Proposition}
\newtheorem{thm}[prop]{Theorem}
\newtheorem{lemma}[prop]{Lemma}

\theoremstyle{definition}

\theoremstyle{plain}

\numberwithin{equation}{section}

\title[Residue of projections in Boutet de Monvel's calculus]{Non-commutative residue of projections in Boutet de Monvel's calculus}
\author{Anders Gaarde}
\address{University of Copenhagen \\ Department of Mathematical
  Sciences \\ Universitetsparken 5 \\ DK-2100 Copenhagen, Denmark.}
\email{gaarde@math.ku.dk}
\subjclass[2000]{58J42, 58J32, 35S15}

\linespread{1.2}

\begin{document}

\begin{abstract}
Using results by Melo, Nest, Schick, and Schrohe on the \linebreak
$K$-theory of 
Boutet de Monvel's calculus of boundary value problems, we show
that the non-commutative residue introduced by Fedosov, Golse,
Leichtnam, and Schrohe vanishes on projections in the calculus.

This partially answers a question raised in a recent collaboration with
Grubb, namely whether the residue is zero on sectorial projections for
boundary value problems: This is confirmed to be true when the
sectorial projections is in the calculus.
\end{abstract}

\maketitle

\section{Introduction}

Boutet de Monvel \cite{BdM71} constructed a calculus, often called the
Boutet de Monvel calculus (or algebra), of pseudodifferential boundary
operators on a manifold with boundary. It includes the classical
differential boundary value problems as well of the parametrices of
the elliptic elements:

Let $X$ be a compact $n$-dimensional manifold with boundary $\partial
X$; we consider $X$ as an embedded submanifold of a closed
$n$-dimensional manifold $\widetilde X$. Denote by $X^\circ$ the
interior of $X$. Let $E$ and $F$ be smooth complex vector bundles over
$X$ and $\partial X$, respectively, with $E$ the restriction to $X$ of a
bundle $\wt E$ over $\wt X$. 

An operator in Boutet de Monvel's calculus --- a (polyhomogeneous)
Green operator --- is a map $A$ acting on 
sections of $E$ and $F$, given by a matrix
\begin{equation}
  \label{eq:greenop0}
  A = \begin{pmatrix} P_+ + G & K \\[2mm] T & S \end{pmatrix} \ : \
  \begin{matrix} C^\infty(X,E) & & C^\infty(X,E) \\ \oplus & \to & \oplus
    \\ C^\infty(\partial X,F) & & C^\infty(\partial X,F) \end{matrix},
\end{equation}
where $P$ is a pseudodifferential operator ($\psi$do) on $\wt X$ with
the transmission property and $P_+$ is its truncation to $X$:
\begin{equation}
  \label{eq:trunc}
  P_+ = r^+ P e^+, \quad r^+ \text{ restricts from $\wt X$ to
    $X^\circ$, $e^+$ extends by 0.}
\end{equation}
$G$ is a singular Green operator, $T$ a trace operator, $K$ a Poisson
operator, and $S$ a $\psi$do on the closed manifold $\partial X$. See
\cite{BdM71}, Grubb \cite{G-book}, or Schrohe \cite{Sch01} for details.

Fedosov, Golse, Leichtnam, and Schrohe \cite{FGLS96} extended the
notion of noncommutative residue known from closed mani\-folds (cf.\
Wodzicki \cite{Wod82}, \cite{Wod84}, and Guillemin \cite{Gui85}) to
the algebra of Green operators. The 
noncommutative residue of $A$ from \eqref{eq:greenop0} was
defined to be
\begin{multline}
  \label{eq:fglsres}
  \res_X (A) = \int_X \int_{S_x^*X} \tr_E p_{-n}(x,\xi) \dbar S(\xi) dx
  \\ + \int_{\partial X}\int_{S_{x'}^* \partial X} \big[ \tr_E (\tr_n
  g)_{1-n}(x',\xi') + \tr_F s_{1-n}(x',\xi') \big] \dbar S(\xi') dx'.
\end{multline}
Here $\tr_E$ and $\tr_F$ are traces in $\Hom(E)$ and
$\Hom(F)$, respectively; $\dbar S(\xi)$ (resp.\ $\dbar
S(\xi')$) denotes the surface measure 
divided by
$(2\pi)^n$ (resp.\ $(2\pi)^{n-1}$); $\tr_n g$ is the normal trace of
$g$; and the subscripts $-n$ and $1-n$ indicate that we only
consider the homogeneous terms of degree $-n$ resp.\ $1-n$. Also, a
sign error in \cite{FGLS96} has been corrected, cf.\ Grubb and Schrohe
\cite[(1.5)]{GSc01}.

It is well-known \cite{Wod82} that on a closed manifold, the noncommutative
residue of a classical $\psi$do projection (or idempotent) is zero. In
the present paper we wish to show 
that the same holds in the case of Green operators. We will use
$K$-theoretic arguments (in a $C^*$-algebra setting) to effectively
reduce the problem to the known case of closed manifolds.

\medskip

In our recent collaboration with Grubb \cite{GG07} we studied certain
spectral projections: For the realization $B = (P+G)_T$ of an 
elliptic boundary value problem $\{ P_+ + G, T\}$ of order $m>0$ with
two spectral 
cuts at angles $\theta$ and $\varphi$, one can define the
\textit{sectorial projection} $\Pi_{\theta,\varphi}(B)$. It is a (not
necessarily self-adjoint) projection whose range contains the 
generalized eigenspace of $B$ for the sector $\Lambda_{\theta,\varphi}
= \{ r e^{i \omega} \mid r>0,\, \theta < \omega < \varphi \}$ and whose
nullspace contains the generalized eigenspace for $\Lambda_{\varphi,\theta+2\pi}$.
It was considered earlier by Burak \cite{Bu70}, and in the boundary-less
case by Wodzicki \cite{Wod82} and Ponge \cite{Po06}.

In general this operator is not in Boutet de Monvel's calculus, but we
showed that it has a residue in a slightly more general
sense. The question was posed whether this residue vanishes.

The question of the non-commutative residue of projections is particularly
interesting in the context of zeta-invariants as discussed by Grubb
\cite{G-zeta} and in \cite{GG07}: The \textit{basic zeta value}
$C_{0,\theta}(B)$ for the realization $B$ of a boundary value problem 
is defined via a choice of spectral cut in the complex plane; the
difference in the basic zeta value based on two spectral cut angles $\theta$
and $\varphi$ is then given as the non-commutative residue 
of the corresponding sectorial projection:
\begin{equation}
  \label{eq:zeta}
  C_{0,\theta}(B) - C_{0,\varphi}(B) = \frac{2 \pi i}{m} \res_X (
  \Pi_{\theta,\varphi}(B) ).
\end{equation}
Our results here show that the dependence of $C_{0,\theta}(B)$ upon
$\theta$ is trivial whenever the projection $\Pi_{\theta,\varphi}(B)$
lies in Boutet de Monvel's calculus. 

\medskip


It should be noted that the litterature in functional analysis and
PDE-theory often uses ``projection'' as a synonym for idempotent,
while $C^*$-algebraists furthermore require that projections are
self-adjoint; we will try to avoid confusion by explicitly using the 
term ``$\psi$do projection'' for the idempotent operators here.

\section{Preliminaries and notation}

We employ Blackadar's \cite{Bla98} approach to $K$-theory: A pre-$C^*$-algebra
$B$ is called local if it, as a subalgebra of its $C^*$-completion
$\overline B$, is closed under holomorphic function
calculus (and all of its matrix algebras must have this property as well). 
Let $\M_\infty(B)$ denote the direct
limit of the matrix algebras $\M_m(B)$, $m\in\N$. Define $\IP_\infty(B)=
\Idem(\M_\infty(B))$ --- resp.\ $\IP_m(B) = \Idem(\M_m(B))$ --- to be the
set of all --- resp.\ all $m\times m$ --- idempotent matrices with entries from
$B$. Define the relation $\sim$ on $\IP_\infty(B)$ by 
\begin{equation}
  \label{eq:eqrel}
  x \sim y \text{ if there exist } a,b \in \M_\infty(B) \text{ such
    that } x = a b \text{ and } y = b a.
\end{equation}
If $B$ has a unit we define $K_0(B)$ to be the Grothendieck group of the
semigroup $V(B) = \IP_\infty(B) / \sim$. If $B$ has no unit, we consider the
scalar map from the unitization --- indicated with a tilde as in $\wt
B$ or $B^\sim$ --- of $B$ to the complex numbers
$s : \wt B \to \C$ defined by $s(b + \lambda 1_{\wt B}) = \lambda$,
and then define $K_0(B)$ as the kernel of the induced map $s_* :
K_0(\wt B) \to K_0(\C)$.

A fact that we shall use several times is that if $B$ is local, then
\cite[p. 28]{Bla98}
\begin{equation}
  \label{eq:local}
  V(B) \cong V(\overline B), \text{ and hence } K_0(B) \cong
  K_0(\overline B).
\end{equation}
Combined with \textit{the standard picture of $K_0$} this
implies that 
\begin{equation}
  \label{eq:stpic0}
  K_0(\overline B) = \{ \, [x]_0 - [y]_0 \mid x,y \in \IP_m(B), m\in
  \N \, \}
\end{equation}
in the case where $B$ is unital, and
\begin{equation}
  \label{eq:stpic1}
  K_0(\overline B) = \{ \, [x]_0 - [y]_0 \mid x,y \in \IP_m(\wt B)
  \text{ with } x \equiv y \text{ mod } \M_m(B), m\in \N \, \}
\end{equation}
in the non-unital case \cite{Bla98}.

\medskip

Let $\mathscr A$ denote the set of Green operators as in
\eqref{eq:greenop0} of order and class zero; it 
defines a $*$-subalgebra of the bounded operators on the Hilbert space
$\mathcal H = L_2(X,E)\oplus H^{-\frac 12}(\partial X,F)$; 
we will denote by $\mathfrak A$ its $C^*$-closure in $\mathcal B(\mathcal
H)$. $\mathscr
A$ is local with $\overline{\mathscr A} = \mathfrak A$, cf.\
Melo, Nest, and Schrohe \cite{MNS03b}, so
$K_0(\mathscr A) \cong K_0(\mathfrak A)$. 
Note that the $K$-theory of $\mathscr A$ is independent of the
specific bundles \cite[Section 1.5]{MNS03b}, so for simplicity we
study explicitly in this paper only the simplest trivial case $E = X \times
\C$ and $F = \partial X \times \C$. 

$\mathscr K$ denotes the subalgebra of smoothing operators, $\mathfrak
K$ its $C^*$-closure (the ideal of compact operators).
We let $\mathscr I$ denote the set of elements in $\mathscr A$ of the
form 
\begin{equation}
  \label{eq:Iideal}
  \begin{pmatrix} \varphi P \psi + G & K \\ T & S \end{pmatrix}
\end{equation}
with $\varphi,\psi\in C_c^\infty(X^\circ)$, $P$ a $\psi$do on $\wt X$
of order zero, and $G, K, T$, and $S$ of negative order and class
zero. $\mathfrak I$ will be the $C^*$-closure of $\mathscr I$ in
$\mathfrak A$. 

The noncommutative residue defined in \cite{FGLS96} is a trace --- 
a linear map that vanishes on commutators --- $\res :
\mathscr A \to \C$, and therefore induces a group homo\-morphism $\res_*
: K_0(\mathscr A) \to \C$ such that
\begin{equation}
  \label{eq:res1}
  \res_*( [ A ]_0 ) = \res_X (A)
\end{equation}
for any idempotent $A \in \mathscr A$. Our goal is to
prove the vanishing of $\res_*$, which obviously implies that
$\res_X(A)=0$ for any idempotent $A$.

The quotient map $q : \mathfrak A \to \mathfrak A/\mathfrak K$ induces
an isomorphism $q_* : K_0(\mathfrak A) \to K_0(\mathfrak A/\mathfrak
K)$ \cite{MNS03b}. The
isomorphisms $K_0(\mathscr A) \cong K_0( \mathfrak A) \cong
K_0(\mathfrak A/\mathfrak K)$ allow us to 
extend the noncommutative residue: For each $[ \mathcal A+ \mathfrak K]_0$ in
$K_0(\mathfrak A/\mathfrak K)$ there is an $A \in \IP_\infty(\mathscr
A)$ such that $q_* [A]_0 = [ \mathcal A + \mathfrak K]_0$, and we then
define 
\begin{equation}
  \label{eq:restilde}
  \wt \res_* [\mathcal A + \mathfrak K]_0 = \res_* [A]_0 = \res_X (A).
\end{equation}
The map $\wt\res_*$ is really just $\res_* \circ \
q_*^{-1}$, and is thus a group homomorphism $K_0(\mathfrak A/\mathfrak K)
\to \C$. 


\section{K-theory and the residue}

We employ results from Melo, Schick, and Schrohe \cite{MSS06}, in
particular the fact that ``each element in $K_0(\mathfrak A/\mathfrak
K)$ can be written as the sum of two elements, 
one in the range of $m_*$ and one in the range of $s''$, thus in the
range of $i_*$'' (bottom of page 11). In other words
\begin{equation}
  \label{eq:split1}
K_0(\mathfrak A/\mathfrak K) = q_* m_* K_0( C(X) ) + i_* K_0 (
\mathfrak I/\mathfrak K).
\end{equation}
Here $m : C(X) \to \mathfrak A$ sends $f$ to the multiplication operator
$\begin{pmatrix} f & 0 \\ 0 & 0 \end{pmatrix}$ and $i$ is the
inclusion $\mathfrak I/\mathfrak K \to \mathfrak A/\mathfrak K$; $m_*$
and $i_*$ are then the corresponding induced maps in $K_0$. We will in
general suppress $i$ and $i_*$ to simplify notation.

We will show that $\wt\res_*$ vanishes on both terms in the right
hand side of \eqref{eq:split1}. The following lemma treats the first
of these terms: 

\begin{lemma} \label{lem:resmK}
$\widetilde\res_*$ vanishes on $q_* m_* K_0(C(X))$.
\end{lemma}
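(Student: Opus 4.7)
The plan is to represent any class in $K_0(C(X))$ by a smooth idempotent matrix, lift it directly to an honest idempotent in $\M_n(\mathscr A)$, and then observe that the FGLS formula \eqref{eq:fglsres} gives zero for purely algebraic reasons.

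First I would reduce to smooth idempotents. The algebra $C^\infty(X)$ sits inside $C(X)$ as a dense subalgebra closed under holomorphic functional calculus, hence is local in the sense of Blackadar, so \eqref{eq:local} yields $K_0(C^\infty(X)) \cong K_0(C(X))$. Therefore every class in $K_0(C(X))$ is of the form $[p]_0 - [q]_0$ for smooth idempotents $p, q \in \M_n(C^\infty(X))$, and by linearity of $\widetilde\res_*$ it suffices to show $\widetilde\res_*(q_* m_*[p]_0) = 0$ for each such $p$.

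Next, for smooth $p$, the matrix $m(p) = \begin{pmatrix} p & 0 \\ 0 & 0 \end{pmatrix}$ is a bona fide element of $\M_n(\mathscr A)$: its only nonzero block is a matrix of multiplication operators, realised as a zeroth-order classical $\psi$do on $\wt X$ with transmission property (the symbol being $p(x)$, independent of $\xi$), while its singular Green, trace, Poisson, and boundary $\psi$do components are all zero. Since $m(p)$ itself lies in $\mathscr A$, the definition \eqref{eq:restilde} gives
\[
  \widetilde\res_*(q_* m_* [p]_0) = \res_*[m(p)]_0 = \res_X(m(p)).
\]

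Finally I would read off the vanishing directly from \eqref{eq:fglsres}. The homogeneous component of degree $-n$ of the symbol of $m(p)$ is zero because the symbol is of degree $0$ and constant in $\xi$ (and $n \geq 1$), so the interior integral vanishes; both boundary integrals vanish because the singular Green, trace, and boundary $\psi$do parts are zero. Hence $\res_X(m(p)) = 0$, and the lemma follows. The only mildly delicate point in this scheme is the reduction from continuous to smooth idempotents, which is standard from the local-algebra framework already in play; the residue computation itself presents no real obstacle.
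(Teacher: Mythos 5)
Your proposal is correct and follows essentially the same route as the paper: reduce to smooth idempotents via locality of $C^\infty(X)$ in $C(X)$ and the standard picture of $K_0$, note that $m(p)$ is an idempotent in $\M_n(\mathscr A)$, and apply \eqref{eq:restilde} together with the vanishing of $\res_X$ on multiplication operators. The only difference is that you verify $\res_X(m(p))=0$ explicitly from \eqref{eq:fglsres}, where the paper simply recalls this fact; that is a harmless (and correct) elaboration.
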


\begin{proof}
Recall that a multiplication operator is in particular a Green
operator whose noncommutative residue is zero.

Let $f\in \IP_m(C^\infty(X))$; $m(f)$ acts by multiplication with a
smooth (matrix) function
and therefore lies in $\IP_m(\mathscr A)$. Then
$q_* m_* [f]_0 = q_* 
[m(f)]_0 = [m(f) + \mathfrak K]_0$, and according to \eqref{eq:restilde}
\begin{equation}
  \label{eq:restilde2}
  \wt\res_* ( q_* m_* [f]_0 ) = \res_* [ m(f) ]_0 = \res_X (m(f)) = 0.
\end{equation}
Since $C^\infty(X)$ is local in $C(X)$ \cite[3.1.1-2]{Bla98}, any
element of $K_0(C(X))$ can be written as $[f]_0 - [g]_0$ for some $f,g
\in \IP_m(C^\infty(X))$, cf.\ \eqref{eq:stpic0}. The lemma follows from this.
\end{proof}

We now turn to the second term of \eqref{eq:split1}; our strategy is
to show that the elements of $K_0(\mathfrak I/\mathfrak K)$ correspond to
$\psi$dos with symbols supported in the interior of $X$. This allows
us to construct certain projections for which the noncommutative
residue is given as the residue of a projection on the closed manifold
$\widetilde X$.

The principal symbol induces an isomorphism $\mathfrak I/\mathfrak K
\cong C_0(S^*X^\circ)$ 
\cite[Theorem 1]{MNS03b}. We will denote the induced isomorphism in
$K_0$ by $\sigma_*$, i.e.,
\begin{equation}
  \label{eq:sigmaiso}
  \sigma_* : K_0( \mathfrak I/\mathfrak K)
  \stackrel{\cong}{\longrightarrow} K_0 (C_0 (S^*X^\circ)).
\end{equation} 

Like in Lemma \ref{lem:resmK} we wish to consider smooth functions
instead of merely continuous functions; the following shows that
instead of $C_0(S^*X^\circ)$, it 
suffices to look at smooth functions (symbols) compactly supported in
the interior:

The algebra $C_c^\infty(S^* X^\circ)$, equipped with the sup-norm,
is a local $C^*$-algebra \cite[3.1.1-2]{Bla98} with completion
$C_0(S^* X^\circ)$. It follows from \eqref{eq:local}
that the injection $C_c^\infty(S^* X^\circ) \to C_0(S^*X^\circ)$
induces an isomorphism 
\begin{equation} 
\label{eq:injiso} 
 K_0(C_c^\infty(S^* X^\circ)) \cong K_0(C_0(S^*X^\circ)).
\end{equation}

We now show that each compactly supported symbol in
$K_0(C_c^\infty(S^* X^\circ))$ gives rise to a $\psi$do projection
$\Pi_+$ on $X$ which is in 
fact the truncation of a $\psi$do projection on $\wt X$. This will
allow us to calculate the residue of $\Pi_+$ from the residue of a
projection on the closed manifold $\wt X$.

\begin{lemma} \label{lem:proj}
Let $p(x,\xi) \in \IP_m(C_c^\infty(S^* X^\circ)^\sim)$. There
is a zero-order $\psi$do projection $\Pi$ acting on
$C^\infty(X,\C^m)$, such that its symbol is constant on $\widetilde X
\setminus X$, its truncation $\Pi_+$ is an idempotent in $\M_m(\mathscr
I^\sim)$, and 
\begin{equation}
  \label{eq:sigmastar}
\sigma_* q_* ( [ \Pi_+ ]_{0} ) = [ p ]_0.
\end{equation}
\end{lemma}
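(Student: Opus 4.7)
The plan is to lift $p$ to a $\psi$do projection $\Pi$ on $\wt X$ whose truncation to $X$ lies in the Boutet de Monvel calculus. Write $p = A + p_0$ with $A \in \M_m(\C)$ a constant idempotent (forced by $p^2 = p$ in the unitization) and $p_0 \in \M_m(C_c^\infty(S^*X^\circ))$; extending $p_0$ by zero, $p$ becomes a smooth idempotent symbol on all of $S^*\wt X$, equal to $A$ for $x$ outside some compact $K_0 \subset X^\circ$. Fix a compact $K \subset X^\circ$ with $K_0$ in its interior. My target is a zero-order $\psi$do projection $\Pi$ on $\wt X$ with principal symbol $p$ and with $\Pi - A\cdot I$ having Schwartz kernel supported in $K \times K$; this kernel-support condition will give both $\Pi_+ \in \M_m(\mathscr I^\sim)$ and exact idempotency of $\Pi_+$.

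To build $\Pi$, pick any zero-order $\psi$do $Q = A + R_0$ on $\wt X$ with principal symbol $p$ and cut off the kernel of $R_0$ by a function in $C_c^\infty(X^\circ \times X^\circ)$ equal to $1$ near the diagonal inside $K \times K$; this places the kernel of $R_0$ in $K \times K$ while changing $Q$ only by smoothing. Since $p^2 = p$ makes $Q^2 - Q$ of order $-1$, successive symbol corrections (all with kernels in $K \times K$) reduce $Q^2 - Q$ to smoothing. The essential spectrum of $Q$ in $L^2(\wt X, \C^m)$ then lies in $\{0,1\}$, and a small contour $\Gamma$ around $1$ avoiding the discrete spectrum gives a projection
\begin{equation*}
  \Pi = \frac{1}{2\pi i} \oint_\Gamma (\lambda - Q)^{-1}\, d\lambda.
\end{equation*}
The algebra of operators $cI + dA + R$ (with $c,d \in \C$ and $R$ a zero-order $\psi$do of kernel in $K\times K$) is closed under products and inverses within the $\psi$do calculus, hence under holomorphic functional calculus, so $\Pi = A + R$ for some zero-order $\psi$do $R$ with kernel in $K\times K$ and principal symbol $p_0$.

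For the conclusions about $\Pi_+ = A|_X + r^+ R e^+$: the kernel localization of $R$ lets one write $R = \varphi P \psi + G$ with $\varphi,\psi \in C_c^\infty(X^\circ)$ equal to $1$ on $K$, $P$ a $\psi$do on $\wt X$, and $G$ smoothing of negative order, so $\Pi_+ \in \M_m(\mathscr I^\sim)$. Since $R e^+u$ is supported in $K \subset X^\circ$ for every $u$, the operator $e^+ r^+$ (multiplication by $\chi_X$) acts as the identity on it, whence $(r^+ R e^+)^2 = r^+ R^2 e^+$; combined with $A^2 = A$ and the identity $AR + RA + R^2 = R$ (equivalent to $\Pi^2 = \Pi$), this yields $\Pi_+^2 = \Pi_+$. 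Extending the principal symbol to unitizations, $\sigma^\sim(\Pi_+) = A + \sigma(R) = p$, hence $\sigma_* q_* [\Pi_+]_0 = [p]_0$. The main obstacle is the simultaneous maintenance of kernel localization and exact idempotency for $\Pi$: iterative symbol corrections alone give idempotency only modulo smoothing pieces which may not be localized, while holomorphic functional calculus yields a true projection but with a priori delocalized smoothing corrections. The resolution is to carry out both the symbol iteration and the contour-integral functional calculus inside the local subalgebra described above, so that both operations stay within the class of $\psi$dos with the required kernel-support condition.
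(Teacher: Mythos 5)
Your construction is correct in outline, but it is a genuinely different route from the paper's. The paper also splits $p=\alpha+\beta$ and extends by zero, but then it builds the projection as a \emph{sectorial} projection $\Pi=\Pi_{-\pi/2,\pi/2}(C)$ of an auxiliary second-order elliptic operator $C=\OP\bigl((2\wt p-I)\,|\xi|^2\bigr)$, using Grubb's special atlas so that the symbol of $C$ is \emph{exactly} $(2\beta-I)|\xi|^2$ outside $\supp\alpha$; there the parameter-dependent parametrix is exact, so the full symbol of $\Pi$ is the constant $\beta$ near $\partial X$, the projection property of $\Pi$ comes for free from the known theory of sectorial projections (Ponge), and idempotency of $\Pi_+$ follows from $L(\Pi,\Pi)=0$ for operators whose symbol is constant near the boundary (Grubb's book, Theorem 2.7.5). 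You instead quantize $p$ directly at order zero, localize the Schwartz kernel in $K\times K\subset X^\circ\times X^\circ$, correct to an idempotent modulo smoothing, and pass to a true projection by Riesz functional calculus inside the subalgebra $\{cI+dA+R\}$; idempotency of $\Pi_+$ then follows from $e^+r^+$ acting as the identity on the range of $Re^+$, which is cleaner than the $L(\Pi,\Pi)$ argument and gives the stronger statement that the full symbol of $\Pi$ is constant outside $K$. What the paper's route buys is that all delicate analytic points are delegated to existing results (parameter-elliptic parametrices, sectorial projections), whereas your route is more elementary but places the real weight exactly where you flag it: you must know (or prove) that an order-zero classical $\psi$do on the closed manifold $\wt X$ which is invertible in $\mathcal B(L^2)$ is elliptic with inverse again in the calculus (spectral invariance), and that the resolvent integral can be performed inside the calculus with the principal symbol map commuting with the contour integral. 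The kernel-support bookkeeping through the inverse is not automatic from locality alone --- $S(I-S)^{-1}S$ is a priori only a bounded operator --- but it does work if you combine spectral invariance (to get that $(I-S)^{-1}-I$ is an order-zero $\psi$do) with the identity $(I-S)^{-1}=I+S+S(I-S)^{-1}S$ (to get that its kernel sits in $K\times K$); similarly the contour should merely be chosen in the resolvent set (eigenvalues may accumulate at $1$), with any extra enclosed discrete eigenvalues contributing only smoothing, hence not affecting the principal symbol. With those two points made explicit, your proof is complete and yields all three conclusions of the lemma, with the final identification $\sigma_*q_*[\Pi_+]_0=[p]_0$ argued at the same level of detail as in the paper.
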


\begin{proof}
By definition of the unitization of $C_c^\infty(S^* X^\circ)$, we can
write $p$ as a sum
\begin{equation}
  \label{eq:pab}
  p(x,\xi) = \alpha(x,\xi) + \beta,
\end{equation}
with $\alpha \in \M_m(C_c^\infty(S^* X^\circ))$ and $\beta \in
\M_m(\C)$. Note that $\beta$ itself is idempotent, since $p =
\beta$ outside the support of $\alpha$.

We extend $\alpha$ by zero to obtain a smooth function on the closed manifold
$S^* \widetilde X$ denoted $\widetilde \alpha(x,\xi)$. We 
get a $\psi$do symbol (also denoted $\wt \alpha(x,\xi)$) of order
zero on $\widetilde X$ by requiring $\wt\alpha$ to be homogeneous
of degree zero in $\xi$. Let $\wt p(x,\xi) = \wt\alpha(x,\xi) +
\beta$.

We now have an idempotent $\psi$do-symbol $\wt p$ on $\wt X$; we then
construct a $\psi$do projection on $\wt X$ that has $\wt p$ as its
principal symbol. 

In \cite[Chapter 3]{G-zeta}, Grubb constructed an operator that, for a
suitable choice of atlas on the manifold, carries over to the
Euclidean Laplacian 
in each chart, modulo smoothing operators. Hence, choose that
particular atlas on 
$\widetilde X$ and let $D$ denote this particular operator, i.e., with
scalar symbol $d(x,\xi) = |\xi|^2$. Define the auxiliary second order
$\psi$do $C = \OP(c(x,\xi))$, with symbol $c(x,\xi)$ given in the
local coordinates of the specified charts as
\begin{equation}
  \label{eq:csymb}
c(x,\xi) = ( 2\widetilde p(x,\xi) - I ) d(x,\xi).  
\end{equation}
Since $\wt p$ is idempotent, the eigenvalues of
$2 \wt p - I$ are $\pm 1$, cf.\ \eqref{eq:app1}, so $C$ is an elliptic
second order operator and $c(x,\xi) - \lambda$ is parameter-elliptic
for $\lambda$ on each ray in $\mathbb C\setminus \mathbb R$.

Then we can define the sectorial projection, cf.\ \cite{Po06},
\cite{GG07}, $\Pi = \Pi_{\theta,\varphi}(C)$ with angles $\theta =
-\frac \pi2$, $\varphi = \frac\pi2$,
\begin{equation}
  \label{eq:Pidef}
\Pi = \frac{i}{2\pi}
\int_{\Gamma_{\theta,\varphi}} \lambda^{-1} C
(C-\lambda)^{-1}\, d\lambda.  
\end{equation}
$\Pi$ is a $\psi$do projection \cite{Po06} on $\wt X$ with symbol
$\pi$ given in local coordinates by
\begin{equation}
  \label{eq:pisymb}
\pi(x,\xi) = \frac{i}{2\pi} \int_{\mathcal C(x,\xi)} q(x,\xi,\lambda)
\, d\lambda,  
\end{equation}
where $q(x,\xi,\lambda)$ is the symbol with parameter for a
parametrix of $c(x,\xi) - \lambda$, and $\mathcal C(x,\xi)$ is a
closed curve encircling the eigenvalues of $c_2(x,\xi)$ --- the principal
symbol of $C$ --- in the $\{ \Re z > 0 \}$ half-plane.

The eigenvalues of $c_2(x,\xi) = (2\wt p(x,\xi) - I)
|\xi|^2$ are $\pm |\xi|^2$, so we can choose $\mathcal C(x,\xi)$ as the
boundary of a small ball $B(|\xi|^2,r)$ around $+|\xi|^2$.

Hence, the principal symbol of $\pi(x,\xi)$ is
\begin{align}
\pi_0(x,\xi) & = \frac{i}{2\pi} \int_{\mathcal C(x,\xi)}
q_{-2}(x,\xi,\lambda) \, d\lambda \notag \\ 
& = \frac{i}{2\pi} \int_{\partial B(|\xi|^2,r)} [ (2 \wt p(x,\xi) - I)
|\xi|^2 - \lambda]^{-1} \, d\lambda
= \wt p(x,\xi), 
\end{align}
according to Lemma \ref{lem:app1}.
So $\Pi$ is a $\psi$do projection with principal symbol
$\wt p(x,\xi)$, as desired. 

Observe that for $x$ outside the support of $\wt \alpha$, we have $c(x,\xi)
= (2\beta - I)|\xi|^2$ and 
$q(x,\xi,\lambda) = q_{-2}(x,\xi,\lambda) = ( (2\beta - I) |\xi|^2 -
\lambda)^{-1}$ so $\pi(x,\xi) = \pi_0(x,\xi) = \beta$ there. (We cannot be
sure that the full symbol of $\pi$ equals $\wt p$ inside the support,
since coordinate-dependence will in general influence the lower order terms of
the parametrix.) In particular,
$\pi(x,\xi)$ is constant equal to $\beta$ for $x\in \widetilde
X\setminus X$.

Now consider the truncation $\Pi_+$. We have
\begin{equation}
  \label{eq:Piplus}
(\Pi_+)^2 = (\Pi^2)_+ - L(\Pi, \Pi) = \Pi_+ - L(\Pi,\Pi),
\end{equation}
where the singular Green operator $L(P,Q)$ is defined as $(PQ)_+ - P_+
Q_+$ for $\psi$dos $P$ and $Q$.
Since $\pi(x,\xi)$ equals the constant matrix $\beta$ in a
neighborhood of the boundary $\partial X$ it follows, cf.\
\cite[Theorem 2.7.5]{G-book}, that $L(\Pi,\Pi)=0$, so $(\Pi_+)^2 =
\Pi_+$.

Since the symbol of $\Pi - \beta$ is compactly supported within
$X^\circ$, we can write $\Pi_+ = \varphi P \psi + \beta$ for some
$\varphi, \psi, P$, as in \eqref{eq:Iideal}; hence $\Pi_+$ is in
$\M_m(\mathscr I^\sim)$. Technically, $\Pi_+$ lies in the algebra where
the boundary bundle $F$ is the zero-bundle, but inserting zeros into
$\Pi_+$'s matrix form will clearly allow us to augment it to the
present case with $F = \partial X \times \C$.

Finally we take a look at \eqref{eq:sigmastar}: Since $\Pi_+$ is an
idempotent in $\M_m(\mathscr I^\sim)$ it defines a $K_0$-class
$[\Pi_+]_0$ in $K_0(\mathscr I^\sim)$. Then $q_* [\Pi_+]_0$ defines
a class in $K_0(\mathfrak I/\mathfrak K^\sim)$, a class
defined solely by its principal symbol. Since the principal symbol is
exactly the idempotent $p(x,\xi)$ we obtain \eqref{eq:sigmastar} by
definition.
\end{proof}

\begin{thm} \label{thm:resvan}
The noncommutative residue of any projection in (the norm closure of) 
the Boutet de Monvel calculus is zero.
\end{thm}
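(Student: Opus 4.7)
The plan is to show that $\wt\res_*$ vanishes identically on $K_0(\mathfrak A/\mathfrak K)$; granted this, any idempotent $A \in \IP_\infty(\mathscr A)$ satisfies $\res_X(A) = \res_*[A]_0 = \wt\res_* q_*[A]_0 = 0$, and the locality isomorphism $K_0(\mathscr A) \cong K_0(\mathfrak A)$ extends the conclusion to idempotent $K_0$-classes represented by projections in the norm closure $\mathfrak A$.

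By the splitting \eqref{eq:split1} and Lemma \ref{lem:resmK}, it suffices to prove that $\wt\res_*$ vanishes on $i_* K_0(\mathfrak I/\mathfrak K)$. Combining the isomorphisms \eqref{eq:sigmaiso} and \eqref{eq:injiso}, I identify $K_0(\mathfrak I/\mathfrak K)$ with $K_0(C_c^\infty(S^*X^\circ))$ and invoke the standard picture \eqref{eq:stpic1} for the non-unital local algebra $C_c^\infty(S^*X^\circ)$: every element is of the form $[p]_0 - [p']_0$ with $p, p' \in \IP_m(C_c^\infty(S^*X^\circ)^\sim)$ agreeing modulo $\M_m(C_c^\infty(S^*X^\circ))$. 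Applying Lemma \ref{lem:proj} to each of $p$ and $p'$ produces zero-order $\psi$do projections $\Pi, \Pi'$ on $\wt X$ with truncations $\Pi_+, \Pi'_+ \in \M_m(\mathscr I^\sim)$ satisfying $\sigma_* q_*([\Pi_+]_0 - [\Pi'_+]_0) = [p]_0 - [p']_0$. Since $\sigma_*$ is an isomorphism, $q_*([\Pi_+]_0 - [\Pi'_+]_0)$ is precisely the given class in $K_0(\mathfrak I/\mathfrak K)$, so the task reduces to verifying that $\res_X(\Pi_+) = 0$, and likewise for $\Pi'_+$.

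For this last step, the construction in Lemma \ref{lem:proj} shows $\Pi_+$ is the truncation of a single $\psi$do $\Pi$ on $\wt X$, so it carries no singular Green, trace, Poisson, or boundary-$\psi$do parts. Consequently the boundary integrals in \eqref{eq:fglsres} vanish and $\res_X(\Pi_+)$ reduces to $\int_X \int_{S^*_x X} \tr \pi_{-n}(x,\xi)\,\dbar S(\xi)\,dx$, where $\pi$ is the symbol of $\Pi$. Since $\pi(x,\xi) = \beta$ is constant for $x$ outside $\supp \wt\alpha$, every negative-order homogeneous term vanishes there, so the integral over $X$ extends trivially to all of $\wt X$ and coincides with $\res_{\wt X}(\Pi)$. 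Wodzicki's classical vanishing theorem then gives $\res_{\wt X}(\Pi) = 0$.

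The main subtlety lies in this final step: it requires verifying that the Boutet de Monvel boundary contributions really do drop out, which depends on the fact (emphasized in Lemma \ref{lem:proj}) that $\pi(x,\xi)$ is the constant $\beta$ in a neighborhood of $\partial X$ so that $L(\Pi,\Pi) = 0$ and no singular Green term is generated upon truncation.
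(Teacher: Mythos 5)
Your proposal is correct and follows essentially the same route as the paper's own proof: the splitting \eqref{eq:split1} with Lemma \ref{lem:resmK}, passage to $C_c^\infty(S^*X^\circ)$ via \eqref{eq:sigmaiso} and \eqref{eq:injiso}, Lemma \ref{lem:proj} to realize classes by truncated projections $\Pi_+$, and the identification $\res_X(\Pi_+) = \res_{\wt X}(\Pi)$ followed by Wodzicki's vanishing theorem. Your added remark that the boundary terms of \eqref{eq:fglsres} drop out because $\Pi_+$ has no singular Green, trace, Poisson, or boundary parts is exactly what the paper uses implicitly in \eqref{eq:resPplus}.
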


\begin{proof}
As mentioned, it suffices to show that $\res_*$ vanishes on
$K_0(\mathscr A) \cong K_0(\mathfrak A)$. In turn, according to 
equation \eqref{eq:split1} and Lemma \ref{lem:resmK}, we only need to
show that $\wt\res_*$ vanishes on $K_0(\mathfrak I/\mathfrak K)$.

So let $\omega \in K_0(\mathfrak I/\mathfrak K)$. Employing
\eqref{eq:stpic1}, \eqref{eq:sigmaiso}, and \eqref{eq:injiso} we can find
$p, p'$ in $\IP_m(C_c^\infty(S^* X^\circ)^\sim)$ such that
\begin{equation}
\label{eq:sigmaomega}
  \sigma_* \omega = [p]_0 - [p']_0.
\end{equation}

Now, for $p$, $p'$ we use Lemma \ref{lem:proj} to find corresponding
$\psi$dos $\Pi$, $\Pi'$ with the specific properties mentioned
there. By \eqref{eq:sigmastar} and \eqref{eq:sigmaomega} we see that
\begin{equation}
  \label{eq:pp}
  q_*[\Pi_+]_0 - q_* [\Pi'_+]_0 = \sigma_*^{-1} \big( [p]_0 - [p']_0
  \big) = \omega.
\end{equation}
Using equation \eqref{eq:restilde} we now see that
\begin{equation}
  \label{eq:resPI}
  \wt\res_* \omega = \res_X (\Pi_+) - \res_X (\Pi'_+).
\end{equation}
Here
\begin{equation}
  \label{eq:resPplus}
  \res_X (\Pi_+) = \int_{X} \int_{S^*_x X} \tr
  \pi_{-n}(x,\xi) \dbar   S(\xi) dx. 
\end{equation}
By construction, $\pi(x,\xi)$ is constant equal to $\beta$ outside $X$;
in particular $\pi_{-n}(x,\xi)$ is zero for $x \in \wt X\setminus X$ and
therefore
\begin{equation}
  \label{eq:res12}
\int_{X} \int_{S^*_x X} \tr \pi_{-n}(x,\xi) \dbar
S(\xi) dx = \int_{\widetilde X} \int_{S^*_x \widetilde X}
\tr \pi_{-n}(x,\xi) \dbar S(\xi) dx.
\end{equation}
In other words
\begin{equation}
  \label{eq:resP}
\res_X ( \Pi_+ ) = \res_{\wt X}( \Pi ),
\end{equation}
where the latter is the noncommutative residue of a $\psi$do
projection on a closed manifold. It is well-known \cite{Wod82},
\cite{Wod84} that the latter always vanishes. Likewise
we obtain $\res_X (\Pi'_+) = 0$ and finally
\begin{equation}
  \label{eq:resomega0}
\widetilde \res_* \omega = 0  
\end{equation}
as desired.
\end{proof}

In \cite{GG07}, it was an open question whether the residue is zero on
a sectorial projection for a boundary value 
problem. This theorem answers that question in the positive for the
cases where the sectorial projection lies in the $C^*$-closure of
$\mathscr A$.

It is not, at this time, clear for which boundary value problems this
is true. We showed in \cite{GG07} that there certainly are boundary
value problems where the sectorial projection is not in $\mathscr A$;
whether or not they lie in $\mathfrak A$ is something we intend to return
to in a future work.

\setcounter{section}{0}
\def\thesection{\Alph{section}}
\section{Appendix}

\begin{lemma}
\label{lem:app1}
Let $M \in \IP_m(\C)$. Let $d>0$ and let $\partial B(d, r)$
denote the closed curve in the complex plane along the boundary of the
ball with center $d$ and radius $0<r<d$. Then
\begin{equation}
\label{eq:app0}
  \frac{i}{2\pi} \int_{\partial B(d,r)} [ (2 M - I) d - \lambda ]^{-1}
  d\lambda 
  = M.
\end{equation}
\end{lemma}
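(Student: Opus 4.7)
The plan is to reduce the statement to a scalar (eigenvalue-by-eigenvalue) computation, since the integrand is a rational function of a single matrix $M$. Because $M$ is idempotent, its minimal polynomial divides $\lambda(\lambda-1)$ and therefore has distinct roots; hence $M$ is diagonalizable. Write $M = S \operatorname{diag}(I_k, 0_{m-k}) S^{-1}$ for some invertible $S \in \M_m(\C)$ and $k = \operatorname{rank} M$. Both sides of \eqref{eq:app0} are equivariant under conjugation by $S$ (the integrand $[(2M-I)d - \lambda]^{-1}$ clearly is, and integration against $d\lambda$ preserves this), so it suffices to verify the identity in the diagonal case $M = \operatorname{diag}(I_k, 0_{m-k})$.

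In that case $(2M - I)d = \operatorname{diag}(d\cdot I_k,\, -d \cdot I_{m-k})$, so
\begin{equation*}
 [(2M-I)d - \lambda]^{-1} = \operatorname{diag}\bigl( (d-\lambda)^{-1} I_k,\ (-d-\lambda)^{-1} I_{m-k} \bigr).
\end{equation*}
This reduces \eqref{eq:app0} to two independent scalar contour integrals along $\partial B(d,r)$, which I would evaluate by the residue theorem. The first has a simple pole at $\lambda = d$, which lies inside $B(d,r)$; a standard computation gives $\frac{i}{2\pi} \oint_{\partial B(d,r)} (d - \lambda)^{-1} d\lambda = 1$ (taking the contour oriented counterclockwise, as is implicit in the sectorial-projection formula \eqref{eq:Pidef}). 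The second integrand has its only pole at $\lambda = -d$, and since $r < d$ that pole lies outside $B(d,r)$, so the integral vanishes. Assembling the two blocks gives $\operatorname{diag}(I_k, 0_{m-k}) = M$, as required.

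There is essentially no hard step: the only point to be careful about is the sign/orientation convention for $\frac{i}{2\pi}$ versus the more familiar $\frac{1}{2\pi i}$, and the fact that the disk $B(d,r)$ is chosen to isolate only the eigenvalue $+d$ of $(2M-I)d$ (corresponding precisely to the eigenvalue $1$ of $M$, i.e.\ to the range of $M$). This is exactly what makes the spectral projection pick out $M$ itself rather than $I - M$, which is the content used in \eqref{eq:pisymb}--\eqref{eq:csymb} of the main text.
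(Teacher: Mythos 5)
Your proof is correct, but it takes a slightly different route from the paper's. The paper avoids diagonalization entirely: it verifies directly the partial-fraction identity $[(2M-I)d-\lambda]^{-1} = \frac{M}{d-\lambda} - \frac{I-M}{d+\lambda}$ for $\lambda \neq \pm d$ (a one-line check using $M^2=M$), and then applies the residue theorem, the only pole inside $\partial B(d,r)$ being $\lambda = d$, whose contribution is exactly $M$ under the $\frac{i}{2\pi}$ normalization with counterclockwise orientation. Your argument instead decomposes $M$ by similarity --- idempotency forces the minimal polynomial to divide $\lambda(\lambda-1)$, hence diagonalizability, as you note --- and reduces to two scalar contour integrals, using that conjugation commutes with the integral. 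Both are sound; the paper's identity buys brevity and is coordinate-free, while your block computation makes explicit the spectral picture (the contour isolates the eigenvalue $+d$, whose eigenprojection is $M$, and the pole at $-d$ lies outside since $r<d$), which is precisely the mechanism exploited in the construction of $\Pi$ in the main text. Your sign check $\frac{i}{2\pi}\oint_{\partial B(d,r)}(d-\lambda)^{-1}\,d\lambda = 1$ is also correct.
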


\begin{proof}
A direct computation shows that, for $\lambda \neq \pm d$,
\begin{equation}
  \label{eq:app1}
  [ (2 M - I)d - \lambda ]^{-1} = \frac{M}{d-\lambda} -
  \frac{I-M}{d+\lambda}.
\end{equation}
The result in \eqref{eq:app0} then follows from the residue theorem.
\end{proof}

\section*{Acknowledgements}

The author is grateful to Gerd Grubb and Ryszard Nest for several
helpful discussions.

\end{document}